\documentclass[11pt]{amsart}

\usepackage{amsmath,amssymb,amsthm}
\usepackage[colorlinks=true,linkcolor=blue,citecolor=blue,urlcolor=blue]{hyperref}

\numberwithin{equation}{section}

\newtheorem{theorem}{Theorem}[section]
\newtheorem{lemma}[theorem]{Lemma}
\newtheorem{proposition}[theorem]{Proposition}
\theoremstyle{remark}
\newtheorem{remark}[theorem]{Remark}

\DeclareMathOperator{\sgn}{sgn}
\newcommand{\Ba}{\mathbf{a}}
\newcommand{\Bx}{\mathbf{x}}
\newcommand{\Bv}{\mathbf{v}}
\newcommand{\Bw}{\mathbf{w}}
\newcommand{\Beps}{\boldsymbol{\epsilon}}

\title[A convexity proof of Pohst's inequality]{A convexity proof of Pohst's~inequality}

\author[Scott Duke Kominers]{Scott Duke Kominers}
\address{Harvard Business School; Department of Economics and Center of Mathematical Sciences and Applications, Harvard University; and a16z crypto}
\email{kominers@fas.harvard.edu}

\thanks{I used LLMs to assist with analysis and computations in the preparation of this article, particularly GPT-5.5 Pro and Claude~4.8 Opus (both accessed in part via Poe with the support of Quora, where I am an advisor). In particular, the inductive convexity reduction used here was initially suggested by Claude~4.8 Opus, and GPT-5.5 Pro identified a crucial refinement. I especially appreciate a thorough review from Refine.ink.
The problem, final methods, and eventual written form are my own; and of course any errors remain my responsibility. This work was conducted while I was visiting the Technological Innovation, Entrepreneurship, and Strategic Management (TIES) Group at the MIT Sloan School of Management; I greatly appreciate their hospitality.}

\subjclass[2020]{26D15; 05A20, 11R27, 26B25}
\keywords{Pohst's inequality, regulators, convexity, product inequalities}

\begin{document}

\begin{abstract}
We give a short analytic proof of Raposo's signed refinement of Pohst's inequality.
\end{abstract}

\maketitle

\section{Introduction}

Let $N\ge 1$ and let $y_1,\dots,y_N\in\mathbb{R}\setminus\{0\}$ satisfy
\[
|y_1|\le |y_2|\le \cdots \le |y_N|.
\]
We set
\[
P_N(y_1,\dots,y_N)
=
\prod_{1\le i<j\le N}
\left|1-\frac{y_i}{y_j}\right|,
\]
and let $p$ and $m$ denote the number of positive and negative entries, respectively, so that $p+m=N$. We give a new proof of the following bound.

\begin{theorem}[Raposo~\cite{RaposoRefinement2025}]\label{thm:main}
For every $N\ge 1$ and every admissible tuple as above,
\[
P_N(y_1,\dots,y_N)\le 2^{\min(p,m)}.
\]
In particular $P_N\le 2^{\lfloor N/2\rfloor}$, since $\min(p,m)\le\lfloor N/2\rfloor$.
\end{theorem}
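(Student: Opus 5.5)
The plan is an induction on $N$ with two ingredients. First, an extremal reduction: with the sign pattern fixed, the supremum of $P_N$ is approached when two adjacent magnitudes coincide. Second, a collapsing step that removes such a coincident pair.

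Fix the signs $\epsilon_i=\sgn y_i$ and write $r_i=|y_i|$, so $0<r_1\le\cdots\le r_N$. Each factor is then $1-\epsilon_i\epsilon_j r_i/r_j$. Since $P_N$ is invariant under $y\mapsto \lambda y$ for $\lambda>0$ and under $y\mapsto -y$, I may normalize $r_N=1$. The bounds $P_1=1$ and $P_2=|1-y_1/y_2|\le 1$ or $\le 2$ (according as the signs agree or differ) are immediate and serve as base cases.

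\textbf{Step 1 (reduction to a collision).} Fix $k$ and regard $P_N$ as a function of the single variable $r=r_k$ on the interval $[r_{k-1},r_{k+1}]$. Every factor involving $k$ has the form $1\pm r/r_j$ for $j>k$ or $1\pm r_i/r$ for $i<k$. After the substitution $r=e^{s}$, each of $\log(1+e^{s-c})$ and $\log(1+e^{c-s})$ is convex in $s$. The factors with a minus sign, $\log(1-e^{\pm(s-c)})$, are concave, and this is the difficulty.

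I would try first to handle them via the grouping suggested by the intended inductive convexity argument. Pair each same-sign factor $1-r_i/r$ with the structure of the remaining terms. The aim is to show that $\log P_N$, restricted to a suitable one-parameter family, is convex. The candidate family moves an entire maximal block of consecutive equal-sign entries by a common scaling $s\mapsto s+t$. Within such a block the same-sign factors are scale invariant, so only cross-sign factors $1+e^{\pm(t+c)}$ and same-sign factors against other blocks depend on $t$.

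If this convexity holds, the maximum over $t$ is attained at an endpoint. There, the block's extreme magnitude meets a neighbouring magnitude. If the neighbour has the same sign, the blocks merge and we recurse on fewer blocks; if the sign is opposite, we obtain an exact collision $|y_k|=|y_{k+1}|$ with $y_k=-y_{k+1}$. A configuration consisting of a single sign block gives $P_N\le 1$ directly, since every factor is then $1-r_i/r_j\le 1$. I expect this convexity step to be the main obstacle. If block-scaling fails to be convex, the fallback is a direct check that the relevant one-variable function, cleared of denominators, is a polynomial whose maximum on the interval lies at the endpoints.

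\textbf{Step 2 (collapsing an opposite pair).} Suppose $y_{k+1}=-y_k$ with common magnitude $a$. The pair's own factor is $1+1=2$. For any other entry $z$ with $|z|\le a$, its factors with the pair multiply to
\[
|1-z/a|\cdot|1+z/a| = 1-z^2/a^2\le 1.
\]
For $|z|\ge a$, the corresponding product $1-a^2/z^2$ is likewise at most $1$. Hence
\[
P_N(y)\le 2\,P_{N-2}(y \text{ with } y_k,y_{k+1} \text{ removed}).
\]
The removed pair consists of one positive and one negative entry, so $\min(p,m)$ drops by exactly one. Induction then gives
\[
P_N\le 2\cdot 2^{\min(p,m)-1}=2^{\min(p,m)}.
\]
Boundary cases, where an interval degenerates or an interval endpoint is $0$ (letting $r_1\to 0$ simply deletes $y_1$ and preserves the bound by induction), are handled by the same induction. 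The final statement $P_N\le 2^{\lfloor N/2\rfloor}$ is then immediate.
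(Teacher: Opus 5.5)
Your Step 2 is correct and matches the paper's collapse lemma: the paired factors $1-z^2/a^2$ and $1-a^2/z^2$ are at most $1$, and $\min(p,m)$ drops by exactly one. The gap is Step 1, which carries the whole argument. The family you propose is not convex, and your fallback fails too.

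Rescaling one maximal same-sign block $B$ by $e^{t}$ also changes the ratios between $B$ and the \emph{other} blocks of the same sign. Those factors contribute $\log(1-e^{-u})$, which is concave, and they can dominate. Take $\Beps=(+,-,+)$ with $(|y_1|,|y_2|,|y_3|)=(\tfrac14,\tfrac12,1)$, and move the block $\{y_1\}$: let $r_1$ range over $[0,\tfrac12]$ with $r_2,r_3$ fixed. Then
\[
P_3=\tfrac32\,(1+2r_1)(1-r_1),
\]
which equals $\tfrac32$ at both endpoints but $\tfrac{27}{16}$ at $r_1=\tfrac14$. This one-variable polynomial therefore has a strict interior maximum, which defeats both the convexity claim and the endpoint fallback. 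Moving $\{y_3\}$ behaves the same way; this is the coordinatewise-maximal saddle at $(x_1,x_2)=(\tfrac12,\tfrac12)$ that the paper discusses in its closing remarks. For patterns such as $(+,-,+,-)$, every block has same-sign partners outside it, so no choice of block keeps the concave factors fixed.

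The missing idea is to move \emph{all} entries of one sign against all entries of the other sign at once: $\log|y_i|\mapsto\log|y_i|+t\epsilon_i$. This leaves every same-sign ratio fixed, so no concave factor varies. Every opposite-sign log-distance changes by $\pm 2t$. Hence, when both signs occur, $\log P_N$ along this line is a nonempty sum of the strictly convex function $u\mapsto\log(1+e^{-u})$ composed with nonconstant affine maps. So no configuration with all $|y_k|<|y_{k+1}|$ can be a local maximum.

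To finish cleanly, use compactness rather than an iterative pushing argument. Work with the adjacent ratios $x_k=|y_k|/|y_{k+1}|$ on the closed cube $[0,1]^{N-1}$. There $P_N$ is a polynomial, so its maximum is attained, and by the above it must lie on a face.
\begin{itemize}
\item On a face $x_k=1$, your Step 2 applies, or $P_N=0$ if the pair has the same sign.
\item On a face $x_k=0$, the product factors over $\{1,\dots,k\}$ and $\{k+1,\dots,N\}$. Strong induction together with $\min(p_L,m_L)+\min(p_R,m_R)\le\min(p,m)$ then closes the bound.
\end{itemize}
Note that $x_k=0$ can occur at any $k$, not only as $r_1\to 0$. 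For example, with $(-,-,+,+)$ the signed translation drives $x_2\to 0$ in one direction, so you need a genuine split lemma here.
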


Products of the shape $P_N$ arise when one bounds the regulator $R_K$ of a number field $K$ from below in terms of its discriminant $D_K$; for totally real fields this was the motivation of Remak and Pohst, and it remains central to the determination of fields of small regulator~\cite{Friedman1989,ADF2016,FR2019}. Remak~\cite{Remak1952} proved, for complex variables of modulus at most $1$, the bound $P_N\le N^{N/2}$. Pohst~\cite{Pohst1977} conjectured the sharp constant $2^{\lfloor N/2\rfloor}$ in the real case and verified it for $N\le 11$ by a computer-assisted argument, factoring the product according to each sign pattern into blocks to which four elementary inequalities apply. Bertin~\cite{Bertin1996} attempted a proof for all $N$, but unfortunately that argument was incomplete. The inequality was finally established for all $N$, independently, by Raposo~\cite{RaposoProof2024} and by Battistoni and Molteni~\cite{BM2021}; the signed refinement with exponent $\min(p,m)$ stated in Theorem~\ref{thm:main} is due to Raposo~\cite{RaposoRefinement2025}, and related generalized bounds in the one-complex-place setting were obtained by Battistoni and Molteni~\cite{BM2025}.

The known proofs of the sharp real inequality, and of Raposo's signed refinement, are combinatorial in nature: one partitions the triangular array of index pairs $(i,j)$ into blocks of one to four factors and applies Pohst's four elementary inequalities block-by-block, the difficulty lying in the inductive construction of a suitable partition (a ``good partition'' in~\cite{RaposoRefinement2025}, or ``graphical schemes'' transformed by elementary moves in~\cite{BM2021,BM2025}). Raposo notes that any improvement beyond the four elementary inequalities ``would need more tools''~\cite[p.~749]{RaposoRefinement2025}.

The proof we present is of a different character. It is analytic and turns on one convexity observation. After a scale-invariant change of variables (Section~\ref{sec:reform}) the product $P_N$ becomes a continuous function $P_N(\Bx;\Beps)$ on the cube $[0,1]^{N-1}$, where $\Beps$ records the signs. The key point (Section~\ref{sec:nointerior}) is that \emph{this function has no interior local maximum} whenever both signs occur. Passing to logarithmic coordinates, consider the perturbation that translates the positive entries one way and the negative entries the other, uniformly. Each same-sign pairwise distance is then unchanged, while each opposite-sign distance varies linearly; since the opposite-sign logarithmic factors are strictly convex in that distance, the restricted function is strictly convex and thus admits no interior maximum. Hence the maximum lies on the boundary of the cube, where two elementary reductions, a \emph{collapse} (Section~\ref{sec:collapse}) and a \emph{split} (Section~\ref{sec:split}), lower the dimension and an induction closes the bound (Section~\ref{sec:completion}). 

\section{Reformulation on a compact cube}\label{sec:reform}

The product $P_N$ is invariant under $y_i\mapsto c\,y_i$ for any nonzero $c$; a positive $c$ fixes the signs and a negative $c$ reverses them all, leaving $\min(p,m)$ unchanged. For each $i$ put
\[
\epsilon_i=\sgn(y_i)\in\{\pm1\},
\qquad
x_k=\frac{|y_k|}{|y_{k+1}|}\in(0,1]
\quad(1\le k\le N-1),
\]
and for $1\le i<j\le N$ set
\[
X_{ij}=X_{ij}(\Bx)=\prod_{k=i}^{j-1}x_k.
\]
Telescoping gives $X_{ij}=|y_i|/|y_j|$, whence
\[
\frac{y_i}{y_j}=\epsilon_i\epsilon_j X_{ij},
\qquad
\left|1-\frac{y_i}{y_j}\right|
=
\bigl|1-\epsilon_i\epsilon_j X_{ij}\bigr|.
\]
Since $X_{ij}\in[0,1]$, the quantity $1-\epsilon_i\epsilon_j X_{ij}$ is already nonnegative: it equals $1-X_{ij}\in[0,1]$ for a same-sign pair and $1+X_{ij}\in[1,2]$ for an opposite-sign pair, so the absolute value may be dropped. For a fixed sign vector $\Beps\in\{\pm1\}^N$ define, for all $\Bx\in[0,1]^{N-1}$,
\begin{equation}\label{eq:Pdef}
P_N(\Bx;\Beps)
=
\prod_{1\le i<j\le N}
\bigl(1-\epsilon_i\epsilon_j X_{ij}(\Bx)\bigr);
\end{equation}
this is a polynomial in $\Bx$, hence it is continuous on the closed cube $[0,1]^{N-1}$.

Admissible tuples with the prescribed signs correspond exactly to points $\Bx\in(0,1]^{N-1}$: given such an $\Bx$ one recovers a tuple by setting $|y_N|=1$ and $|y_i|=x_i x_{i+1}\cdots x_{N-1}$, which are nonzero precisely because we always have $x_k>0$. Points with some $x_k=0$ are boundary points of this compactification and need not correspond to admissible tuples; we introduce them only to locate the maximum of the continuous extension. Thus Theorem~\ref{thm:main} follows once we prove
\[
P_N(\Bx;\Beps)\le 2^{\min(p,m)}
\qquad\text{for all }\Bx\in[0,1]^{N-1}.
\]

\begin{proposition}\label{prop:SN}
For every $N\ge 1$, every sign vector $\Beps\in\{\pm1\}^N$ with sign counts $(p,m)$, and every $\Bx\in[0,1]^{N-1}$,
\[
P_N(\Bx;\Beps)\le 2^{\min(p,m)}.
\]
\end{proposition}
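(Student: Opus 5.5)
The plan is to prove Proposition~\ref{prop:SN} by strong induction on $N$, following the outline in the introduction. The base cases are immediate. If $N=1$ the product is empty and equals $1$. If $\Beps$ is constant, so that $\min(p,m)=0$, every factor is $1-X_{ij}\in[0,1]$ and hence $P_N\le 1$. So assume $N\ge 2$, both signs occur, and the bound holds in all smaller dimensions. By compactness and continuity, $P_N(\cdot;\Beps)$ attains its maximum at some point $\Bx^*\in[0,1]^{N-1}$. We show in turn that $\Bx^*$ cannot be interior, and that on each face the bound follows from the induction hypothesis.

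\emph{No interior maximum.} Suppose $\Bx^*\in(0,1)^{N-1}$, and pass to the coordinates $t_i=\log|y_i|$ with $t_N=0$. Then $X_{ij}=e^{t_i-t_j}$. Perturb by $t_i\mapsto t_i+s\,\epsilon_i$ for the free indices $i<N$, after first recentering by the scale invariance so that the perturbation stays inside the cube for small $|s|$. Same-sign differences $t_i-t_j$ are unchanged, so those factors are constant in $s$. Each opposite-sign difference moves linearly with slope $\pm 2$. The function $u\mapsto\log(1+e^{u})$ is strictly convex, so $s\mapsto\log P_N$ is a sum of a constant and strictly convex functions; it is nonconstant because at least one opposite-sign pair exists. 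A strictly convex function has no interior local maximum, which contradicts maximality. One care point is that the ordering $|y_1|\le\dots\le|y_N|$ must be preserved. At an interior point every $x_k<1$, so the ordering is strict and small $s$ keeps it. Hence $\Bx^*$ has some $x_k\in\{0,1\}$.

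\emph{Face $x_k=1$ (collapse).} Here $|y_k|=|y_{k+1}|$. If $\epsilon_k=\epsilon_{k+1}$, the factor $1-X_{k,k+1}$ vanishes and there is nothing to prove. If the signs are opposite, that factor equals $2$. For every other index $i$, the factors pairing $i$ with $k$ and with $k+1$ are $(1-\epsilon_i\epsilon_k X)$ and $(1+\epsilon_i\epsilon_k X)$ with the same $X$, and their product is $1-X^2\le 1$. Deleting both $y_k$ and $y_{k+1}$ therefore gives
\[
P_N\le 2\,P_{N-2}(\text{remaining};\ p-1,m-1)\le 2\cdot 2^{\min(p,m)-1}.
\]

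\emph{Face $x_k=0$ (split).} Here $X_{ij}=0$ whenever $i\le k<j$, so those cross factors equal $1$. The product then factors as $P_k(y_1,\dots,y_k)\,P_{N-k}(y_{k+1},\dots,y_N)$. By induction this is at most $2^{\min(p_1,m_1)+\min(p_2,m_2)}\le 2^{\min(p,m)}$, using the superadditivity of $\min$.

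I expect the main obstacle to be the interior step. One must verify that the sign-dependent translation is a legitimate direction in the $\Bx$-cube, which needs the normalization $t_N=0$ to be handled via scale invariance, and that the ordering constraints stay valid. One must also confirm that strict convexity in a single direction suffices when the maximum is merely local.
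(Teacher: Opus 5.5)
Your proposal is correct and is essentially the paper's own proof. It uses strong induction, then the signed log-translation $t_i\mapsto t_i+s\epsilon_i$ to rule out an interior maximizer (in cube coordinates this is just $x_k\mapsto x_k e^{s(\epsilon_k-\epsilon_{k+1})}$, the paper's direction $\Ba$ for $v_k=-\log x_k$), and then the collapse and split reductions on the faces. Two small points to tighten. First, the translation must act on \emph{all} indices including $N$: shifting only $i<N$ with $t_N=0$ fixed would move the same-sign differences $t_i-t_N$, and since only differences enter $\Bx$, no normalization is needed at all. Second, boundary points may have other coordinates equal to $0$, where the $y$-tuple is not admissible, so phrase the collapse in $\Bx$-coordinates (merging $x_{k-1}x_{k+1}$ as the paper does), or apply it only after any face $x_\ell=0$ has been handled by the split.
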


We prove Proposition~\ref{prop:SN} by strong induction on $N$.

\section{Base cases and trivial sign patterns}

First, we record the base cases; we need two of them in this instance because part of the argument relies on a bifurcation of the indices.

\begin{lemma}\label{lem:base}
Proposition~\ref{prop:SN} holds for $N=1$ and $N=2$. Moreover, for any $N$, if all signs are equal then $P_N(\Bx;\Beps)\le 1=2^0$.
\end{lemma}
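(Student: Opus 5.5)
The plan is to verify the three claims directly from the definition \eqref{eq:Pdef}, using only that every $X_{ij}(\Bx)$ lies in $[0,1]$ for $\Bx\in[0,1]^{N-1}$.

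For $N=1$ the product in \eqref{eq:Pdef} is empty. Hence $P_1=1=2^0$, and here $\min(p,m)=0$ automatically. For $N=2$ there is exactly one factor, $1-\epsilon_1\epsilon_2 x_1$, and two cases arise. If $\epsilon_1=\epsilon_2$, then $(p,m)$ is $(2,0)$ or $(0,2)$, so $\min(p,m)=0$, and the factor is $1-x_1\in[0,1]$, giving $P_2\le 1$. If $\epsilon_1\ne\epsilon_2$, then $p=m=1$ and the factor is $1+x_1\le 2=2^1$. In both cases the required bound holds on all of $[0,1]$.

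For the final claim, suppose all $\epsilon_i$ are equal. Then $\epsilon_i\epsilon_j=1$ for every pair, so each factor is $1-X_{ij}(\Bx)$. Since each $x_k\in[0,1]$, every $X_{ij}$ is a product of numbers in $[0,1]$ and so lies in $[0,1]$. Therefore each factor lies in $[0,1]$, and so does their product. This gives $P_N(\Bx;\Beps)\le 1=2^0$, which is the claimed bound because $\min(p,m)=0$.

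None of these steps is a real obstacle. The only point needing care is that the bound holds on the closed cube, including points with some $x_k=0$, and this is covered because $X_{ij}\in[0,1]$ there as well.
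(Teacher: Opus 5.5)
Your proposal is correct and follows essentially the same route as the paper: the empty product for $N=1$, the two sign cases of the single factor $1-\epsilon_1\epsilon_2x_1$ for $N=2$, and the observation that with all signs equal every factor is $1-X_{ij}\in[0,1]$. Your explicit remark that $X_{ij}\in[0,1]$ on the whole closed cube, including faces with some $x_k=0$, is exactly the point the paper relies on implicitly.
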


\begin{proof}
For $N=1$ the product is empty, so $P_1=1=2^0$. For $N=2$, $P_2(\Bx;\Beps)=1-\epsilon_1\epsilon_2 x_1$; if the signs are equal then $P_2=1-x_1\le 1=2^0$ with $\min(p,m)=0$, and if they are opposite then $P_2=1+x_1\le 2=2^1$ with $\min(p,m)=1$. Finally, if all signs are equal then every factor of \eqref{eq:Pdef} has the form $1-X_{ij}\in[0,1]$, so $P_N\le 1=2^0$, and $\min(p,m)=0$.
\end{proof}

\section{The boundary reductions}

There are two boundary faces to treat: $x_k=1$, where adjacent absolute values coincide, and $x_k=0$, where the product splits.

\subsection{Collapse at \texorpdfstring{$x_k=1$}{x[k]=1}}\label{sec:collapse}

\begin{lemma}[Collapse]\label{lem:collapse}
Let $N\ge 3$, assume Proposition~\ref{prop:SN} for $N-2$, and let $\Bx\in[0,1]^{N-1}$ with $x_k=1$ for some $k$. Then $P_N(\Bx;\Beps)\le 2^{\min(p,m)}$.
\end{lemma}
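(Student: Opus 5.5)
The plan is to treat the two possible sign patterns on the pair $(k,k+1)$ separately. In each case the face $x_k=1$ either kills the product or lets us remove the indices $k$ and $k+1$ and reduce to $N-2$, which is why the hypothesis is Proposition~\ref{prop:SN} for $N-2$.

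\emph{Equal signs.} If $\epsilon_k=\epsilon_{k+1}$, the factor for the pair $(k,k+1)$ is $1-X_{k,k+1}=1-x_k=0$. So $P_N(\Bx;\Beps)=0\le 2^{\min(p,m)}$ and there is nothing to prove.

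\emph{Opposite signs.} Now suppose $\epsilon_{k+1}=-\epsilon_k$. Since $x_k=1$, we have $X_{ik}=X_{i,k+1}$ for $i<k$ and $X_{kj}=X_{k+1,j}$ for $j>k+1$. I would group the factors of \eqref{eq:Pdef} as follows.
\begin{itemize}
\item The pair $(k,k+1)$ contributes $1+X_{k,k+1}=2$.
\item For $i<k$, the pairs $(i,k)$ and $(i,k+1)$ contribute $(1-\epsilon_i\epsilon_k X_{ik})(1+\epsilon_i\epsilon_k X_{ik})=1-X_{ik}^2\le 1$.
\item For $j>k+1$, the pairs $(k,j)$ and $(k+1,j)$ likewise contribute $1-X_{kj}^2\le 1$.
\item All remaining factors involve only indices in $\{1,\dots,N\}\setminus\{k,k+1\}$.
\end{itemize}
Hence
\[
P_N(\Bx;\Beps)\le 2\prod_{\substack{i<j\\ i,j\notin\{k,k+1\}}}\bigl(1-\epsilon_i\epsilon_jX_{ij}(\Bx)\bigr).
\]

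The key step is to identify this remaining product as $P_{N-2}(\Bx';\Beps')$. Here $\Beps'$ is $\Beps$ with entries $k$ and $k+1$ deleted. The vector $\Bx'\in[0,1]^{N-3}$ is obtained from $\Bx$ by deleting $x_{k-1},x_k,x_{k+1}$ and inserting the single coordinate $x_{k-1}x_kx_{k+1}$ in their place. At the ends we adjust: for $k=1$ we just delete $x_1,x_2$, and for $k=N-1$ we delete $x_{N-2},x_{N-1}$. The merged coordinate lies in $[0,1]$. For any two surviving indices $i<j$, the telescoping product $X_{ij}$ is unchanged by this merging. This bookkeeping, including the edge cases $k\in\{1,N-1\}$, is the only place needing care, and it is routine.

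Finally, $\Beps'$ has sign counts $(p-1,m-1)$ because we removed one positive and one negative entry. Since $N-2\ge 1$, Proposition~\ref{prop:SN} for $N-2$ gives $P_{N-2}(\Bx';\Beps')\le 2^{\min(p,m)-1}$. Therefore $P_N(\Bx;\Beps)\le 2\cdot 2^{\min(p,m)-1}=2^{\min(p,m)}$.
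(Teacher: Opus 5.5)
Your proposal is correct and follows the paper's proof essentially step for step. It uses the vanishing same-sign factor, the value $2$ for the opposite-sign pair, and the pairing of $(i,k),(i,k+1)$ and $(k,j),(k+1,j)$ into factors $1-X^2\le 1$, then identifies the remainder with $P_{N-2}$ on the deleted sign vector with counts $(p-1,m-1)$. The only difference is cosmetic: your merged coordinate $x_{k-1}x_kx_{k+1}$ makes the telescoping identity hold without using $x_k=1$, whereas the paper uses $x_{k-1}x_{k+1}$ and invokes $x_k=1$ at that point; the two coincide on this face.
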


\begin{proof}
If $\epsilon_k=\epsilon_{k+1}$, the factor for the pair $(k,k+1)$ is
\[
1-\epsilon_k\epsilon_{k+1}X_{k,k+1}=1-x_k=0,
\]
so $P_N=0$ and the bound is immediate. We may therefore assume that $\epsilon_{k+1}=-\epsilon_k$; then the $(k,k+1)$ factor is $2$.

We let
\[
\iota:\{1,\dots,N-2\}\longrightarrow \{1,\dots,N\}\setminus\{k,k+1\}
\]
be the increasing deletion map,
\[
\iota(r)=
\begin{cases}
r, & r<k,\\
r+2, & r\ge k.
\end{cases}
\]
Put $\widehat\epsilon_r=\epsilon_{\iota(r)}$. Define $\widehat{\Bx}\in[0,1]^{N-3}$ by
\[
\widehat x_\ell=
\begin{cases}
x_\ell, & \ell\le k-2,\\
x_{k-1}x_{k+1}, & \ell=k-1\ \text{and}\ 1<k<N-1,\\
x_{\ell+2}, & \ell\ge k,
\end{cases}
\]
where the middle case is omitted when $k=1$ or $k=N-1$. Thus $\widehat{\Bx}$ is precisely the adjacent-ratio vector obtained after deleting the two equal-modulus entries in positions $k$ and $k+1$.

For $1\le r<s\le N-2$, write
\[
\widehat X_{rs}=\prod_{\ell=r}^{s-1}\widehat x_\ell.
\]
We claim that
\[
\widehat X_{rs}=X_{\iota(r),\iota(s)}.
\]
Indeed, if $s<k$, then both indices lie before the deleted block and
\[
\widehat X_{rs}=x_r\cdots x_{s-1}=X_{r,s}=X_{\iota(r),\iota(s)}.
\]
If $r\ge k$, then both indices lie after the deleted block and
\[
\widehat X_{rs}=x_{r+2}\cdots x_{s+1}=X_{r+2,s+2}=X_{\iota(r),\iota(s)}.
\]
Finally, if $r<k\le s$, then the pair straddles the deleted block, and
\[
\widehat X_{rs}
=
x_r\cdots x_{k-2}\,(x_{k-1}x_{k+1})\,x_{k+2}\cdots x_{s+1}
=
x_r\cdots x_{s+1}
=
X_{r,s+2},
\]
because $x_k=1$. This is again $X_{\iota(r),\iota(s)}$.

It follows that the factors of \eqref{eq:Pdef} involving neither $k$ nor $k+1$ are exactly the factors of
\[
P_{N-2}(\widehat{\Bx};\widehat{\Beps}).
\]

Finally, we must account for the factors involving one of the deleted indices. For $a<k$, as $x_k=1$, we have $X_{a,k+1}=X_{a,k}$, and hence
\[
\bigl(1-\epsilon_a\epsilon_k X_{a,k}\bigr)
\bigl(1-\epsilon_a\epsilon_{k+1}X_{a,k+1}\bigr)
=
\bigl(1-\epsilon_a\epsilon_k X_{a,k}\bigr)
\bigl(1+\epsilon_a\epsilon_k X_{a,k}\bigr)
=
1-X_{a,k}^2.
\]
Similarly, for $b>k+1$ we have $X_{k,b}=X_{k+1,b}$, and
\[
\bigl(1-\epsilon_k\epsilon_b X_{k,b}\bigr)
\bigl(1-\epsilon_{k+1}\epsilon_b X_{k+1,b}\bigr)
=
1-X_{k+1,b}^2.
\]
Therefore, we have
\[
P_N(\Bx;\Beps)
=
2
\prod_{a<k}\bigl(1-X_{a,k}^2\bigr)
\prod_{b>k+1}\bigl(1-X_{k+1,b}^2\bigr)
P_{N-2}(\widehat{\Bx};\widehat{\Beps})
\le
2\,P_{N-2}(\widehat{\Bx};\widehat{\Beps}),
\]
as all the squared factors lie in $[0,1]$.

Deleting $\epsilon_k,\epsilon_{k+1}$ removes one plus sign and one minus sign, so the new sign counts are $(p-1,m-1)$.
Moreover, 
\[
\min(p-1,m-1)=\min(p,m)-1.
\]
By the inductive hypothesis, we have
\[
P_{N-2}(\widehat{\Bx};\widehat{\Beps})
\le
2^{\min(p,m)-1};
\]
hence, $P_N(\Bx;\Beps)\le 2^{\min(p,m)}$.
\end{proof}

\subsection{Split at \texorpdfstring{$x_k=0$}{x[k]=0}}\label{sec:split}

\begin{lemma}[Split]\label{lem:split}
Let $N\ge 2$ and $1\le k\le N-1$, and assume Proposition~\ref{prop:SN} for $k$ and for $N-k$. If $\Bx\in[0,1]^{N-1}$ with $x_k=0$, then $P_N(\Bx;\Beps)\le 2^{\min(p,m)}$.
\end{lemma}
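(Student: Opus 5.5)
When $x_k=0$, every $X_{ij}$ with $i\le k<j$ contains the factor $x_k$, so it vanishes. The plan is therefore to factor the product at position $k$ and bound the two resulting blocks separately using the inductive hypothesis.

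First, for each pair $(i,j)$ with $i\le k<j$, the corresponding factor in \eqref{eq:Pdef} is $1-\epsilon_i\epsilon_j\cdot 0=1$. So only pairs with both indices in $\{1,\dots,k\}$ or both in $\{k+1,\dots,N\}$ contribute. Next, put $\Bx'=(x_1,\dots,x_{k-1})\in[0,1]^{k-1}$ and $\Beps'=(\epsilon_1,\dots,\epsilon_k)$, and likewise $\Bx''=(x_{k+1},\dots,x_{N-1})\in[0,1]^{N-k-1}$ and $\Beps''=(\epsilon_{k+1},\dots,\epsilon_N)$. For $i<j\le k$ the quantity $X_{ij}$ uses only $x_i,\dots,x_{j-1}$ with $j-1\le k-1$, so it equals $X'_{ij}(\Bx')$. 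Similarly, for $k+1\le i<j$ it equals $X''_{i-k,j-k}(\Bx'')$ after reindexing. This gives
\[
P_N(\Bx;\Beps)=P_k(\Bx';\Beps')\,P_{N-k}(\Bx'';\Beps'').
\]
The boundary cases $k=1$ and $k=N-1$ simply produce an empty block $P_1=1$, which is consistent with Lemma~\ref{lem:base}.

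Now write $(p',m')$ and $(p'',m'')$ for the sign counts of the two blocks, so that $p'+p''=p$ and $m'+m''=m$. The inductive hypothesis for sizes $k$ and $N-k$ gives
\[
P_N\le 2^{\min(p',m')+\min(p'',m'')}.
\]
To finish, observe that $\min(p',m')+\min(p'',m'')$ is at most $p'+p''=p$ and at most $m'+m''=m$, hence at most $\min(p,m)$. This yields the claim.

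I do not see a real obstacle here. The only step that needs care is the reindexing, where one checks that the products $X_{ij}$ inside each block involve exactly the coordinates of $\Bx'$ or $\Bx''$; this holds because no such product passes through index $k$.
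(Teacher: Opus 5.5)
Your proof is correct and follows the paper's argument essentially step for step. You note that crossing factors become $1$ when $x_k=0$, factor $P_N$ into the two block products, apply the inductive hypothesis to each block, and use $\min(p',m')+\min(p'',m'')\le\min(p,m)$. Your explicit reindexing check for the right block is a detail the paper leaves implicit.
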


\begin{proof}
For a crossing pair $i\le k<j$ the monomial $X_{ij}$ contains the factor $x_k=0$, so that factor of \eqref{eq:Pdef} equals $1$; every crossing factor therefore disappears, and the product splits as
\[
P_N(\Bx;\Beps)=P^{(L)}\,P^{(R)},
\]
where
\[
\begin{aligned}
P^{(L)}&=P_k\bigl((x_1,\dots,x_{k-1});(\epsilon_1,\dots,\epsilon_k)\bigr),\\
P^{(R)}&=P_{N-k}\bigl((x_{k+1},\dots,x_{N-1});(\epsilon_{k+1},\dots,\epsilon_N)\bigr),
\end{aligned}
\]
are the products over the index blocks $\{1,\dots,k\}$ and $\{k+1,\dots,N\}$ (for $k=1$ or $k=N-1$ the corresponding coordinate vector is empty). With block sign counts $(p_L,m_L)$ and $(p_R,m_R)$ one has $p=p_L+p_R$ and $m=m_L+m_R$, and the inductive hypothesis gives $P^{(L)}\le 2^{\min(p_L,m_L)}$, $P^{(R)}\le 2^{\min(p_R,m_R)}$. Since
\[
\begin{aligned}
\min(p_L,m_L)+\min(p_R,m_R)&\le p_L+p_R=p,\\
\min(p_L,m_L)+\min(p_R,m_R)&\le m_L+m_R=m,
\end{aligned}
\]
we see that $\min(p_L,m_L)+\min(p_R,m_R)\le\min(p,m)$, and therefore we have $P_N(\Bx;\Beps)\le 2^{\min(p,m)}$.
\end{proof}

\subsection{Boundary reduction synthesis}\label{sec:boundary}

Lemma~\ref{lem:collapse} and Lemma~\ref{lem:split} combine to provide a unified inductive conclusion on the boundary.

\begin{proposition}\label{prop:boundary}
Let $N\ge 3$, and assume Proposition~\ref{prop:SN} has been proven in every positive dimension strictly less than $N$. Then for every sign vector $\Beps\in\{\pm1\}^N$ with sign counts $(p,m)$ and every boundary point $\Bx\in\partial[0,1]^{N-1}$, we have
\[
P_N(\Bx;\Beps)\le 2^{\min(p,m)}.
\]
\end{proposition}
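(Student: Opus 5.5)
A boundary point $\Bx\in\partial[0,1]^{N-1}$ has at least one coordinate equal to $0$ or to $1$, so the plan is to fix such an index $k$ and dispatch the two cases with the lemmas just proved. The only work is to check that the inductive hypotheses those lemmas require lie in the range $1,\dots,N-1$, where Proposition~\ref{prop:SN} is assumed.

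Suppose first that $x_k=1$ for some $1\le k\le N-1$. Since $N\ge 3$, Lemma~\ref{lem:collapse} applies once Proposition~\ref{prop:SN} is available for $N-2$. As $N-2\ge 1$ is a positive dimension strictly less than $N$, this is part of our assumption. Lemma~\ref{lem:collapse} then gives $P_N(\Bx;\Beps)\le 2^{\min(p,m)}$ directly.

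Suppose instead that $x_k=0$ for some $1\le k\le N-1$. Lemma~\ref{lem:split} requires Proposition~\ref{prop:SN} for $k$ and for $N-k$. Both lie in $\{1,\dots,N-1\}$, so both are covered by the hypothesis, and Lemma~\ref{lem:split} yields the bound.

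Every boundary point falls into at least one of these two cases; if several coordinates are extreme we pick any one of them. There is no real obstacle here. The only point needing care is the dimension bookkeeping, namely that $N\ge 3$ guarantees $N-2\ge 1$, so the collapse never calls on a nonexistent case $N-2=0$. This is why two base cases were recorded in Lemma~\ref{lem:base}. The all-equal-sign situation needs no separate treatment, since both lemmas hold for arbitrary $\Beps$.
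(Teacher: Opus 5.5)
Your proposal is correct and follows the paper's proof. You pick an extreme coordinate, apply Lemma~\ref{lem:collapse} when $x_k=1$ (noting $1\le N-2<N$) and Lemma~\ref{lem:split} when $x_k=0$ (noting $1\le k,\,N-k<N$), with the same dimension bookkeeping the paper uses.
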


\begin{proof}
Since $\Bx$ lies on the boundary of the cube, some coordinate $x_k$ is equal to $0$ or to $1$. If $x_k=0$, then Lemma~\ref{lem:split} applies because the required dimensions $k$ and $N-k$ are both strictly smaller than $N$. If $x_k=1$, then Lemma~\ref{lem:collapse} applies because the required dimension $N-2$ is strictly smaller than $N$. Thus in either case we obtain the desired bound.
\end{proof}

\section{The analytic lemma: no interior local maximum}\label{sec:nointerior}

Our boundary reductions, summarized in Proposition~\ref{prop:boundary}, are conditional on the lower-dimensional inductive hypotheses. We now provide the independent analytic ingredient that prevents a maximizer from lying in the interior.

\begin{lemma}[No interior local maximum]\label{lem:nointerior}
Let $N\ge 2$ and suppose $\Beps$ contains at least one $+1$ and at least one $-1$. Then $P_N(\cdot\,;\Beps)$ has no local maximum in the open cube $(0,1)^{N-1}$.
\end{lemma}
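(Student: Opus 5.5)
Suppose, for contradiction, that $\Bx^\ast\in(0,1)^{N-1}$ is a local maximum of $P_N(\cdot\,;\Beps)$. We pass to logarithmic coordinates $u_i=\log|y_i|$, normalized by $u_N=0$, so that $u_k-u_{k+1}=\log x_k$. At an interior point every $x_k\in(0,1)$, so $u_1<u_2<\cdots<u_N$ strictly, and each $X_{ij}=e^{u_i-u_j}$ lies in $(0,1)$. In particular every same-sign factor $1-X_{ij}$ is strictly positive, so $P_N>0$ near $\Bx^\ast$ and $\log P_N$ is smooth there. A local maximum of $P_N$ is then a local maximum of
\[
F(\mathbf u)=\sum_{\epsilon_i=\epsilon_j}\log\bigl(1-e^{-(u_j-u_i)}\bigr)+\sum_{\epsilon_i\ne\epsilon_j}\log\bigl(1+e^{-|u_j-u_i|}\bigr),
\]
where we have used that $i<j$ forces $u_i<u_j$.

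We consider the one-parameter perturbation $u_i(t)=u_i+t\,\epsilon_i$. Only differences $u_i-u_j$ enter $F$, so the normalization $u_N=0$ is harmless: we may re-normalize by subtracting a constant, which means the corresponding $\Bx(t)$ is well defined. Explicitly, $x_k(t)=x_k\,e^{t(\epsilon_k-\epsilon_{k+1})}$. For $|t|$ small this still lies in $(0,1)^{N-1}$ and the ordering of the $u_i$ is preserved. Along the perturbation, every same-sign difference is unchanged. For an opposite-sign pair $i<j$, the difference $d_{ij}(t)=u_j-u_i+t(\epsilon_j-\epsilon_i)$ is affine in $t$ with slope $\pm2\neq0$ and stays positive for small $t$. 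Hence
\[
g(t)=F(\mathbf u(t))=\text{const}+\sum_{\epsilon_i\ne\epsilon_j}\varphi\bigl(d_{ij}(t)\bigr),\qquad \varphi(d)=\log(1+e^{-d}).
\]

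The only computation is the convexity of $\varphi$. We have $\varphi'(d)=-1/(1+e^{d})$, and therefore $\varphi''(d)=e^d/(1+e^d)^2>0$, so $\varphi$ is strictly convex on $\mathbb R$. Each term $\varphi(d_{ij}(t))$ is a strictly convex function composed with a non-constant affine map, hence strictly convex in $t$. Because $\Beps$ has both signs, at least one opposite-sign pair exists. Therefore $g$ is strictly convex on a neighbourhood of $t=0$.

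A strictly convex function on an open interval has no local maximum: $g(0)<\tfrac12\bigl(g(t)+g(-t)\bigr)$, so $g(t)>g(0)$ or $g(-t)>g(0)$ for arbitrarily small $t$. This contradicts $\Bx^\ast$ being a local maximum, since $t\mapsto\Bx(t)$ is continuous with $\Bx(0)=\Bx^\ast$ and $P_N=e^{F}$ near $\Bx^\ast$. The step needing the most care is checking that the perturbation stays inside the open cube and keeps the sign of each opposite-sign difference. This holds because all inequalities $0<x_k<1$ are strict at an interior point, so they persist for small $|t|$.
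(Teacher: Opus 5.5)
Your proof is correct and follows essentially the same route as the paper: in logarithmic coordinates you apply the signed translation $u_i\mapsto u_i+t\epsilon_i$, which leaves every same-sign distance fixed and moves every opposite-sign distance with slope $\pm2$, and then use the strict convexity of $d\mapsto\log(1+e^{-d})$. The only difference is cosmetic: you parametrize by the positions $u_i=\log|y_i|$, whereas the paper uses the adjacent log-ratios $v_k=-\log x_k$ with direction $a_k=\epsilon_{k+1}-\epsilon_k$, and this traces out the same curve $x_k(t)=x_k e^{t(\epsilon_k-\epsilon_{k+1})}$.
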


\begin{proof}
Fix $\Bx\in(0,1)^{N-1}$. For $1\le k\le N-1$ put $v_k=-\log x_k>0$, so $x_k=e^{-v_k}$. For $\Bw\in(0,\infty)^{N-1}$ and $i<j$, set
\[
d_{ij}(\Bw)=w_i+\cdots+w_{j-1},
\]
and define
\[
G(\Bw)
=
\log P_N\bigl((e^{-w_1},\dots,e^{-w_{N-1}});\Beps\bigr)
=
\sum_{1\le i<j\le N}
\log\bigl(1-\epsilon_i\epsilon_j e^{-d_{ij}(\Bw)}\bigr).
\]
This is smooth on $(0,\infty)^{N-1}$, since all its factors are strictly positive there.

It remains to choose the one-dimensional variation. Choose logarithmic positions $z_i$ with $v_k=z_{k+1}-z_k$, so that $d_{ij}=z_j-z_i$. A variation $z_i\mapsto z_i+s b_i$ changes $d_{ij}$ by $s(b_j-b_i)$. To keep every same-sign distance fixed, the numbers $b_i$ must be constant on each sign class; to make every opposite-sign distance vary nontrivially, those two constants must be distinct. Up to adding a common constant and rescaling, the canonical choice is $b_i=\epsilon_i$.

Thus we perturb by $z_i\mapsto z_i+s\epsilon_i$. The induced variation of the adjacent logarithmic ratios is
\[
v_k=z_{k+1}-z_k
\longmapsto
v_k+s(\epsilon_{k+1}-\epsilon_k).
\]
Accordingly, define $\Ba=(a_1,\dots,a_{N-1})$ by
\[
a_k=\epsilon_{k+1}-\epsilon_k.
\]
In particular $\Ba$ is generally \emph{not} a coordinate direction, which is what lets it detect saddle behavior that a coordinatewise analysis misses (cf.~Remark~\ref{rem:example}). Since we have $v_k>0$ for all $k$, there is $\delta>0$ with $\Bv+s\Ba\in(0,\infty)^{N-1}$ for $|s|<\delta$. Along $\Bv(s)=\Bv+s\Ba$,
\[
d_{ij}(\Bv(s))
=
\sum_{\ell=i}^{j-1}\bigl(v_\ell+sa_\ell\bigr)
=
d_{ij}(\Bv)+s\sum_{\ell=i}^{j-1}(\epsilon_{\ell+1}-\epsilon_\ell)
=
d_{ij}(\Bv)+s(\epsilon_j-\epsilon_i),
\]
the inner sum telescoping, so the shift depends only on the endpoint signs. Same-sign pairs ($\epsilon_i=\epsilon_j$) keep $d_{ij}(\Bv(s))$ constant; opposite-sign pairs have $\epsilon_j-\epsilon_i=\pm 2$, and their factor is $h(d_{ij}(\Bv(s)))$ with
\[
h(u)=\log(1+e^{-u}),
\qquad
h''(u)=\frac{e^{-u}}{(1+e^{-u})^2}=\frac{1}{4\cosh^2(u/2)}>0 .
\]
Therefore $\Phi(s):=G(\Bv+s\Ba)$ has
\[
\Phi''(s)
=
4\sum_{\substack{1\le i<j\le N\\ \epsilon_i\ne\epsilon_j}}
h''\bigl(d_{ij}(\Bv(s))\bigr)
=
\sum_{\substack{1\le i<j\le N\\ \epsilon_i\ne\epsilon_j}}
\frac{1}{\cosh^2\!\bigl(d_{ij}(\Bv(s))/2\bigr)}
>0,
\]
the sum being nonempty because both signs occur. Thus $\Phi$ is strictly convex on $(-\delta,\delta)$, and a strictly convex function has no interior local maximum: for small $t>0$,
\[
\Phi(0)<\tfrac12\bigl(\Phi(t)+\Phi(-t)\bigr)\le\max\{\Phi(t),\Phi(-t)\}.
\]
Hence $G$ has no local maximum at $\Bv$. Since $\Bx\mapsto\Bv$ is a homeomorphism of $(0,1)^{N-1}$ onto $(0,\infty)^{N-1}$, the logarithm is strictly increasing, and $P_N>0$ on the open cube, $P_N(\cdot\,;\Beps)$ has no local maximum in $(0,1)^{N-1}$.
\end{proof}

\section{Completion of the induction}\label{sec:completion}

We may now complete our argument.

\begin{proof}[Proof of Proposition~\ref{prop:SN} and Theorem~\ref{thm:main}]
We argue by strong induction on $N$. The cases $N=1,2$ are Lemma~\ref{lem:base}. Fix $N\ge 3$ and assume that Proposition~\ref{prop:SN} holds for every positive integer up to $N-1$.

Let $\Beps\in\{\pm1\}^N$ have counts $(p,m)$. The polynomial $P_N(\cdot\,;\Beps)$ is continuous on the compact cube $[0,1]^{N-1}$ and so attains its maximum at some $\Bx^\ast$. If $\min(p,m)=0$, then all signs are equal and Lemma~\ref{lem:base} gives $P_N(\Bx^\ast;\Beps)\le 1$. Otherwise both signs occur. A maximizer in the open cube would be a local maximum there, contradicting Lemma~\ref{lem:nointerior}; hence $\Bx^\ast$ lies on the boundary of the cube. The strong inductive hypothesis is precisely what we need in order to apply Proposition~\ref{prop:boundary}, so we have
\[
P_N(\Bx^\ast;\Beps)\le 2^{\min(p,m)}.
\]
It follows that
\[
P_N(\Bx;\Beps)
\le
P_N(\Bx^\ast;\Beps)
\le
2^{\min(p,m)}
\]
for all $\Bx$, proving Proposition~\ref{prop:SN}. Theorem~\ref{thm:main} then follows via the reformulation given in Section~\ref{sec:reform}.
\end{proof}

\section{Discussion}\label{sec:remarks}

We close with several remarks on the argument and potential extensions.

\begin{remark}[Per-coordinate maxima are not enough]\label{rem:example}
For $\Beps=(+,-,+)$,
\[
P_3(x_1,x_2)=(1+x_1)(1+x_2)(1-x_1x_2),
\]
and the stationary equations $\tfrac{1}{1+x_1}=\tfrac{x_2}{1-x_1x_2}$, $\tfrac{1}{1+x_2}=\tfrac{x_1}{1-x_1x_2}$ have the interior solution $(\tfrac12,\tfrac12)$, where $P_3=\tfrac{27}{16}$; this point is a maximum along each coordinate axis, but it reflects a saddle point. Along the sign-separating direction of Lemma~\ref{lem:nointerior}, namely $x_1(s)=\tfrac12 e^{2s}$, $x_2(s)=\tfrac12 e^{-2s}$ (which freezes $x_1x_2=\tfrac14$),
\[
P_3(s)=\Bigl(1+\tfrac12 e^{2s}\Bigr)\Bigl(1+\tfrac12 e^{-2s}\Bigr)\tfrac34=\tfrac34\Bigl(\tfrac54+\cosh (2s)\Bigr),
\]
which has a strict minimum at $s=0$. The maximum of $P_3$ is attained on the boundary, e.g., $P_3(1,0)=2=2^{\min(2,1)}$; this is why Lemma~\ref{lem:nointerior} must use a non-coordinate direction.
\end{remark}

\begin{remark}[Heuristic interpretation]\label{rem:heuristic}
In the coordinates $z_i=\log|y_i|$, viewing the entries as charges of sign $\epsilon_i$ on a line and $\log P_N$ as an energy, the perturbation $z_i\mapsto z_i+s\epsilon_i$ fixes all same-charge distances and changes each opposite-charge distance linearly, with slope $\pm2$. Along the signed translation just described, the energy is strictly convex. Thus from any interior configuration, at least one of the two sufficiently small opposite signed translations strictly increases the product. In this sense, our argument can be interpreted as a rigorous form of a charged-particle heuristic presented by Raposo~\cite{RaposoRefinement2025}.
\end{remark}

\begin{remark}[Relation to Bertin's attempted proof]\label{rem:bertin}
Bertin~\cite{Bertin1996} also pursued a boundary-reduction strategy, using a determinant representation and a simplex argument applied row-by-row. The point needing care is that the entries in such a representation are not independent row variables; in the ordered real specialization, they are tied together through the same adjacent-ratio parameters. Consequently, a row-wise simplex extremality argument does not by itself justify a global reduction of the underlying variables to boundary values---equivalently, of the signed adjacent ratios to $0,\pm1$, or of the $x_k$ to $0$ or $1$ with the signs fixed. Our Lemma~\ref{lem:nointerior} provides a valid replacement for that boundary-reduction step, directly in the compactified adjacent-ratio variables. Once the maximum is known to lie on the boundary of the cube, the split and collapse lemmas complete the induction.
\end{remark}

\begin{remark}[Sharpness]\label{rem:sharp}
As a bound depending only on the counts $(p,m)$, the exponent $\min(p,m)$ is optimal. Put $r=\min(p,m)$ and take the sign pattern consisting of $r$ adjacent opposite-sign pairs followed by $|p-m|$ equal signs; setting the ratio within each pair to $1$ and letting the separating ratios tend to $0$ drives $P_N$ to $2^{r}$.

For a \emph{fixed} sign pattern, however, the bound need not be sharp. For example, for $\Beps=(+,+,-,-)$ the compactified maximum is $2$, and hence the supremum over admissible tuples is $2$, although $\min(p,m)=2$. Indeed, by Lemma~\ref{lem:nointerior} any compactified maximum lies on the boundary. On the faces $x_1=1$ and $x_3=1$ the product vanishes, while on the split faces $x_i=0$ Proposition~\ref{prop:SN} in lower dimensions gives at most $2$. On the remaining collapse face $x_2=1$,
\[
P_4(x_1,1,x_3;\Beps)
=
2(1-x_1^2)(1-x_3^2)(1+x_1x_3)
\le 2,
\]
because we have
\[
(1-x_1^2)(1-x_3^2)\le (1-x_1x_3)^2
\]
and, with $q=x_1x_3\in[0,1]$,
\[
(1-q)^2(1+q)\le 1.
\]
The value $2$ is obtained at the compactified vertex $(0,1,0)$ and is approached by admissible points. This is an instance of Raposo's proposed refinement, which we discuss next.
\end{remark}

\begin{remark}[Vertex obstruction for Raposo's conjectured $2^M$~improvement]\label{rem:2M}
Raposo~\cite{RaposoRefinement2025} conjectures the sharper bound
\[
P_N\le 2^{M(\Beps)},
\]
where $M(\Beps)\le\min(p,m)$ is the maximal number of disjoint adjacent opposite-sign pairs in~$\Beps$, equivalently the matching number of the graph on $\{1,\dots,N\}$ whose edges are the adjacent positions of opposite sign.

The vertices of the compactified cube give the natural target value. Indeed, evaluate $P_N(\Bx;\Beps)$ at a vertex $\Bx\in\{0,1\}^{N-1}$. A run of two or more consecutive $1$s locks three or more entries to a common modulus, and among three signs two are equal; hence some same-sign factor vanishes. Thus a nonzero vertex has only isolated $1$s, and each isolated $1$ must lock an adjacent opposite-sign pair.

The isolated $1$s therefore form a matching of adjacent opposite-sign edges. All other factors are split apart by intervening $0$s, and hence
\[
P_N(\Bx;\Beps)
=
2^{\#\{\text{locked pairs}\}}.
\]
It follows that
\[
\max_{\Bx\in\{0,1\}^{N-1}}
\bigl\{P_N(\Bx;\Beps)\bigr\}
=
2^{M(\Beps)};
\]
thus, Raposo's conjecture corresponds to the assertion that the maximum over the whole cube is no larger than this vertex maximum. 

Our convexity approach gives the first part of such a strategy: Lemma~\ref{lem:nointerior} rules out an interior maximizer. On a split face $x_k=0$, the conjectural bound is also compatible with induction, since the matching numbers $M_L$ and $M_R$ of the two induced sign strings satisfy
\[
M_L+M_R\le M(\Beps).
\]

The remaining difficulty concerns an opposite-sign collapse face $x_k=1$. There the collapse calculation gives
\[
P_N(\Bx;\Beps)
=
2
\prod_{a<k}\bigl(1-X_{a,k}^2\bigr)
\prod_{b>k+1}\bigl(1-X_{k+1,b}^2\bigr)
P_{N-2}(\widehat{\Bx};\widehat{\Beps}).
\]
For the coarser bound $2^{\min(p,m)}$, the extra neutral factors $1-X^2$ are harmless because they are at most $1$. For the conjectural $2^{M(\Beps)}$ bound, however, induction on the collapsed sign vector is not automatically sufficient because deleting the collapsed opposite-sign pair can create a new adjacent opposite-sign pair across the gap, so $M(\widehat{\Beps})$ need not be at most $M(\Beps)-1$. For instance, collapsing the middle pair in the sign pattern $(+,+,-,-)$ leaves $(+,-)$, and both matching numbers are equal to $1$.

Thus the neutral factors must provide additional compensation on such collapse faces, but they no longer have the signed product structure to which Lemma~\ref{lem:nointerior} applies. This collapse-face phenomenon appears to be the crux of Raposo's conjectural improvement, and it may also be relevant to Battistoni and Molteni's conjectured extensions~\cite{Battistoni2021,BM2025}.
\end{remark}

\providecommand{\bysame}{\leavevmode\hbox to3em{\hrulefill}\thinspace}
\providecommand{\MR}{\relax\ifhmode\unskip\space\fi MR }
% \MRhref is called by the amsart/book/proc definition of \MR.
\providecommand{\MRhref}[2]{%
  \href{http://www.ams.org/mathscinet-getitem?mr=#1}{#2}
}
\providecommand{\href}[2]{#2}

\end{document}